\definecolor{violet}{cmyk}{1,0,0,0}
\definecolor{mygreen}{cmyk}{1,0,1,0}
\newcommand{\C}{\mathbb{C}}
\newcommand{\R}{\mathbb{R}}
\newcommand{\Z}{\mathbb{Z}}
\newcommand{\fg}{\mathfrak{g}}
\newcommand{\Tr}{\mathrm{Tr}}
\newcommand{\Cox}[1]{h(\text{$#1$})}
\newcommand{\dCox}[1]{h^\vee(\text{$#1$})}
\newtheorem{thm}{Theorem}[section]
\newtheorem{lemma}[thm]{Lemma}
\newtheorem{defn}{Definition}[section]
\newtheorem{rem}[thm]{Remark}
\newtheorem{ack}{Acknowledgment}
\newcommand{\rad}{\mathrm{rad}}
\begin{document}
\title{Macdonald identities: revisited}
\author{K. Iohara and Y. Saito}

\maketitle

\begin{center}
\textit{To the memory of  Professor Masatoshi Noumi}
\end{center}

\begin{abstract}
In this note, after recalling a proof of the Macdonald identities for untwisted affine root systems, we derive 
the Macdonald identities for twisted affine root systems. 
\end{abstract}

\begin{center}
{\small \textbf{MSC2020}:  17B22 (primary) 11F20, 17B67 (secondary)
}
\end{center}

\pagestyle{plain}
\section{Introduction}
In 1972, I. G. Macdonald  \cite{Macdonald1972} obtained several identities involving 
products of the Dedekind $\eta$-function in relation with affine root systems via some 
complicated combinatorial method. Indeed, he first classified \textit{real} affine root systems
and tried finding a generalization of the denominator identity for affine root systems, that is,
an identity of the form $\text{infinite product}\; = \text{infinite sum}$, where the product is taken over 
roots in an affine root system and the sum is taken over the Weyl group of the same affine root system. 
He found that it requires some `\textit{extra factors}' in the product side. 
Later, V. Kac \cite{Kac1974}, R. Moody \cite{Moody1975} and H. Garland and 
J. Lepowsky \cite{GarlandLepowsky1976} independently showed that these `extra factors' 
can be interpreted as the factors related with the \textit{imaginary} roots. 
Starting from the denominator identity for each affine root system, via specialization, 
Macdonald obtained the so-called Macdonald identities. 
There are other known proofs of the Macdonald identities. For example, these identities, except for $BC_l^{(2)}$, were reproved in 1976 via some analysis on modular forms by Van Asch \cite{vAsch1976}, and H. D. Fegan \cite{Fegan1978} in 1978 reproved them only for untwisted case via some analysis of the heat equation on compact simple Lie groups. The latter work relies on the result of Kostant \cite{Kostant1976} where he rewrote the Macdonald identities in such a way as to involve the heat kernel on the corresponding compact Lie group. The reader may realize the richness of the subject. \\

In this note, after reviewing several facts, including the proof of Macdonald identities for
affine root systems of untwisted type, we present another proof of the Macdonald 
identity associated with a twisted affine root system, not of type $BC_l^{(2)}$, which uses unusual folding 
considered by K. Saito in \cite{Saito1985} and some facts from B. Kostant's beautiful 
paper \cite{Kostant1959}. For $BC_l^{(2)}$ case, we present one intermediate step and state the final result rapidly, 
as this case is too special and we believe that the reader who followed other cases may find no difficulty to fill the detail. 
Thus, most of this note is expository and our only modest contribution is 
\S \ref{sect:Macdonald2}. \\

Throughout this note, we don't fix any normalization of the metric, for which there is 
a price to pay in its presentation. \\

\medskip
\begin{ack}{\rm
Research of Y. S.  is supported by JSPS KAKENHI Grant number JP20K03568. We are grateful to an anonymous referee for his/her careful reading and suggestions. 
}\end{ack}
\section{Affine root systems}
Here, we recall some basic facts about finite and affine root systems, in particular, 
two types of folding procedure. 
We adopt the formulations in \cite{Saito1985} for later purpose. 
\subsection{Generalized root system}
Let $F$ be a real vector space and $I: F \times F \rightarrow \R$ be a symmetric bilinear 
form whose signature is $(l_+, l_0, l_-)$ for some non negative integers $l_+, l_0, l_- $ 
such that not all of them are zero. As usual, for any non-isotropic vector $\alpha \in F$, 
we set
\[ \alpha^\vee=\frac{2}{I(\alpha,\alpha)}\alpha \in F, \]
and define an isometry $s_\alpha\in \mathrm{O}(F,I)$ by
\[ s_\alpha(\lambda)=\lambda-I(\lambda, \alpha^\vee)\alpha. \]
\begin{defn} A non-empty discrete subset $R$ of $F$ is called a \textbf{generalized root 
system} if it satisfies
\begin{enumerate}
\item The lattice $Q(R)$ (called the \textbf{root lattice}), spanned by the elements of $R$, 
is full in $F$, i.e., $\R \otimes_{\Z} Q(R) \cong F$. 
\item For any $\alpha \in R$, one has $I(\alpha, \alpha) \neq 0$. 
\item For any $\alpha, \beta \in R$, one has $I(\alpha^\vee, \beta) \in \Z$.
\item For any $\alpha \in R$, one has $s_\alpha(R)=R$.
\item Assume that there exists two subsets $R_1, R_2$ of $R$ which are orthogonal and 
$R_1 \cup R_2=R$, then either $R_1$ or $R_2$ is empty. 
\end{enumerate}
\end{defn}
\vskip 0.1in
The subgroup of $\mathrm{O}(F,I)$ generated by the reflections 
$s_\alpha$ ($\alpha \in R$) is called the \textbf{Weyl group} of $R$ and is denoted by $W(R)$.

A finite root system $R_f$ is a generalized root system belonging to $F_f$ with a positive definite metric $I_f$, and an \textbf{affine root system} $R_{af}$ is a generalized root system belonging to $F$, containing $F_f$ as a subspace of codimension $1$, equipped with a positive semidefinite symmetric bilinear form $I$ whose restriction to $F_f \times F_f$ is $I_f$. The image of $R_{af}$ via the canonical projection $F \twoheadrightarrow F/\rad(I) \cong F_f$, denoted by $R_{af}/\rad(I)$, is called a quotient root system which can be identified with a finite root system $R_f$ belonging to $(F_f,I_f)$. 

For a finite root system $R_f$ of reduced type, let $(R_f)_s$ and $(R_f)_l$ be the set of short and long roots, respectively. Here and after, we fix a set of simple roots $\Pi_f=\{\alpha_i\}_{1\leq i\leq l}$ of $R_f$, thus the set $R_f^+$ of positive roots of $R_f$. We also set $(R_f^+)_\ast=(R_f)_\ast \cap R_f^+$ ($\ast \in \{ \, s, l\, \}$). \\
For $R_f$ of type $BC_l$, the set of roots of middle length is denoted by $(R_f)_m$. We fix a set of simple roots $\Pi_f=\{\alpha_i\}_{1\leq i\leq l}$ of $R_f$, hence of $(R_f)_s \cup (R_f)_m$ which is of type $B_l$, thus the set $R_f^+$ of positive roots of $R_f$. We assume that $\alpha_l \in (R_f)_s$.

\subsection{Affine root systems}\label{sect:aff-root-sys}
Let $R_{af}$ be a reduced affine root system belonging to $(F,I)$, and $R_f$ be the finite root system of type $X_l$ belonging to $(F_f,I_f)$ which is isomorphic to the quotient $R_{af}/\rad(I)$. \\

Let $\delta \in \rad(I)$ be a generator of the lattice $\rad(I) \cap Q(R_{af})$. 
It is known (cf. \cite{Kac1990} and/or \cite{MoodyPianzola1995}) that if 
\begin{enumerate}
\item $R_f$ of type $A_l$ ($l\geq 1$), $D_l$ ($l\geq 4$) or $E_6, E_7$ or $E_8$,  $R_{af}$ is of type $X_l^{(1)}$ defined by
\[ R_{af}=R_f+\Z \delta, \]
if
\item $R_f$ is of type $B_l$, $C_l$, $F_4$ or $G_2$,  there exists $t \in \{1,2\}$ for $R_f$ of type $B_l, C_l$ or $F_4$ and $t \in \{1,3\}$ for $R_f$ of type $G_2$ such that $R_{af}$ is of type $X_l^{(t)}$ defined by
\[ R_{af}=\big( (R_f)_s+\Z \delta \big) \cup \big( (R_f)_l+t\Z \delta\big), \]
otherwise
\item $R_f$ is of type $BC_l$ and $R_{af}$ is of type $BC_l^{(2)}$ is defined by 
\[ R_{af}=\big( (R_f)_s+\Z \delta\big) \cup \big((R_f)_m+\Z \delta\big) \cup \big((R_f)_l+(1+2\Z)\delta\big). \]
\end{enumerate}
Affine root system of type $X_l^{(t)}$ is called \textbf{untwisted} type if $t=1$ and \textbf{twisted} type otherwise. The number $t$ is called 
the \textbf{tier number}. \\

Here and after, we use the nomenclature for the reduced affine root systems 
due to K. Saito \cite{Saito1985}, since they reflect the structure of root systems 
themselves more than those of V. Kac \cite{Kac1990}. 
\begin{rem}
There are several other
known nomenclatures for reduced affine root systems. For the sake of reader's 
convenience, we clarify the relationships between them in the following table$:$ \\
\begin{center}
\begin{tabular}{|c||c|c|c|c|c||c|} \hline
K. Saito $($\cite{Saito1985}$)$ & $X_l^{(1)}$  & $B_l^{(2)}$ & $C_l^{(2)}$ 
& $F_4^{(2)}$ & $G_2^{(3)}$ & $BC_l^{(2)}$\\ \hline
\hline
V. Kac $($\cite{Kac1990}$)$ & $X_l^{(1)}$ & $\begin{matrix} {\tiny D_{l+1}^{(2)}} \\ {\tiny l \geq 2} \end{matrix}$  & $\begin{matrix} {\tiny A_{2l-1}^{(2)}} \\ {\tiny l\geq 3} \end{matrix}$ & 
 $E_6^{(2)}$ & $D_4^{(3)}$  & $A_{2l}^{(2)}$\\ \hline
R. Moody $($\cite{Moody1969}$)$& $X_{l,1}$ & $B_{l,2}$ & $C_{l,2}$  
& $F_{4,2}$ & $G_{2,3}$ & 
$\begin{matrix}
{\tiny A_{1,2}} & \text{\tiny $l=1$}\\ 
{\tiny BC_{l,2}} & \text{\tiny $l>1$} 
\end{matrix}$
\\ \hline
I. Macdonald $($\cite{Macdonald1972}$)$ & 
$\begin{matrix}
{\tiny X_l=X_l^\vee} & \text{\tiny $X$: of type $ADE$}\\ 
{\tiny X_l} &  \text{\tiny $X$: of type $BCFG$} 
\end{matrix}$
& $C_{l}^\vee$ & $B_{l}^\vee$  
& $F_{4}^\vee$ & $G_{2}^\vee$ & $BC_l=BC_l^\vee$\\ \hline
R. Carter $($\cite{Carter2005}$)$ & $\widetilde{X}_l$
& $\widetilde{C}_l^{\mathrm{t}}$ & $\widetilde{B}_l^{\mathrm{t}}$ 
& $\widetilde{F}_4^{\mathrm{t}}$ & $\widetilde{G}_2^{\mathrm{t}}$
& $\widetilde{C}_l'$ \\ \hline
\end{tabular}
\end{center} 
\end{rem}

In the rest of this subsection, we assume that affine root system $R_{af}$ is of type $X_l^{(t)}$. \\

Set
\[ \alpha_0=\begin{cases}  \; \delta-\theta \; & t=1, \\ 
                                          \; \delta-\theta_s\; &t\neq 1 \; \text{and not of type $BC_l^{(2)}$}, \\
                                          \; \delta -2\theta_s \; & \text{$R_{af}$ of type $BC_l^{(2)}$}, 
                   \end{cases} \]
where $\theta \in R_f$ and $\theta_s \in (R_f)_s$ are highest roots. Then, the set
\[ \Pi_{af}:=\{ \alpha_0\} \cup \Pi_f \]
is a set of simple roots, and the matrix $A=\big(\, I(\alpha_i^\vee, \alpha_j)\,\big)_{0\leq i,j \leq l}$ is called the \textbf{generalized Cartan matrix}. \\

Let $a_i, \; a_i^\vee$ ($0\leq i\leq l$) be the positive integers such that 
\begin{enumerate}
\item the integers $a_0, a_1, \ldots, a_l$ (resp. $a_0^\vee, a_1^\vee, \ldots, a_l^\vee$), called \textbf{labels} (resp \textbf{colabels}), are coprime to each other and
\item they satisfy
\[  A\begin{pmatrix} a_0 \\ a_1 \\ \vdots \\ a_l \end{pmatrix}=  \begin{pmatrix} 0 \\ 0 \\ \vdots \\ 0 \end{pmatrix} \qquad \text{and} \qquad 
   \begin{pmatrix} a_0^\vee & a_1^\vee & \cdots & a_l^\vee \end{pmatrix} A=\begin{pmatrix} 0 & 0 & \cdots & 0 \end{pmatrix}. \]
\end{enumerate}
The numbers $\Cox{X_l^{(t)}}:=\sum_{i=0}^l a_i$ and $\dCox{X_l^{(t)}}:=\sum_{i=1}^l a_i^\vee$ are called the \textbf{Coxeter} and \textbf{dual Coxeter} number, respectively. 
\begin{rem}
\begin{enumerate}
\item For $R_{af}$, not of type $BC_l^{(2)}$,
\begin{enumerate}
\item[(i)] $a_0=a_0^\vee=1$ and the vector $\sum_{i=0}^l a_i \alpha_i$ is equal to $\theta$ if $t=1$ and to $\theta_s$ otherwise. 
\item[(ii)] For $t=1$, the number $\Cox{X_l^{(1)}}$ coincides with the \textbf{Coxeter number} $\Cox{X_l}$ of the finite root system of type $X_l$, i.e., the order of a Coxeter element.  
\end{enumerate}
\item For $R_{af}$ of type $BC_l^{(2)}$, 
\begin{enumerate}
\item[(i)] $a_0=1$, $a_l=2$ and $a_0^\vee=2$, $a_l^{\vee}=1$. 
\end{enumerate}
\end{enumerate}
\end{rem}

\subsection{Folding and Mean-Folding}\label{sect:folding}
Here we recall two types of folding procedures introduced by K. Saito \cite{Saito1985}. 
Let $R$ be a finite or an affine reduced root system belonging to the pair $(F,I)$ of a real vector space with a symmetric bilinear form $I$, and let
$\Gamma_R$ be its Dynkin diagram, whose nodes are identified with simple roots. Assume that the oriented graph $\Gamma_R$ admits a non-trivial finite order automorphism 
$\sigma$ which fixes at least one node of $\Gamma_R$. Remark that the map $\sigma$ is a restriction of an isometry of $(F,I)$, which we denote by the same symbol $\sigma$. Let $\langle \sigma \rangle$ be the group generated by $\sigma$. Let $\Tr^{\langle \sigma \rangle}$ and $\Tr_{\langle \sigma \rangle}$ be the linear maps from $F$ to its fixed point subspace $F^{\langle \sigma \rangle}$ defined by
\[ \Tr^{\langle \sigma \rangle}(x)=\sum_{h \in \langle \sigma \rangle} h.x, \qquad \Tr_{\langle \sigma \rangle}(x)=\frac{1}{\vert \langle \sigma \rangle\vert}\sum_{h \in \langle \sigma \rangle} h.x. \]
The sets $\Tr^{\langle \sigma \rangle}(R)$ and $\Tr_{\langle \sigma \rangle}(R)$ are again finite or affine root systems, called the \textbf{folding} and \textbf{mean-folding} of $R$, respectively.  Their Dynkin diagrams can be obtained by ``folding'' appropriately, with respect to $\sigma$, the Dynkin diagram of $\Gamma_R$. 
\begin{rem}
\begin{enumerate}
\item Usually, the folding of $R$ is defined as the image of $R$ in the \textbf{$\langle \sigma \rangle$-coinvariant} of $F$, and such a operation corresponds to the mean-folding $\Tr_{\langle \sigma \rangle}$. 
\item  This automorphism $\sigma$ is a restriction of an isometry of $F$, which also acts 
on the dual root system $R^\vee$ and induces an action on its Dynkin diagram $\Gamma_{R^\vee}$ denoted by $\sigma^\vee$.
Two foldings are related as follows:
\[ \Tr^{{\langle \sigma^\vee \rangle}}(R^{\vee})=\Big(\Tr_{{\langle \sigma \rangle}}(R)\Big)^\vee \qquad 
   \Tr_{\langle \sigma^\vee \rangle}(R^\vee)=\Big(\Tr^{\langle \sigma \rangle}(R)\Big)^\vee. \]
\end{enumerate}
\end{rem}

When a non-trivial diagram automorphism $\sigma_{af}$ of an affine root system $R_{af}$ can be obtained by extending trivially a diagram automorphism $\sigma_f$ of the finite root system $R_f \cong R_{af}/\rad(I)$, namely, 
\[ \sigma_{af}(\alpha_i):=\begin{cases}\;  \alpha_i \; & i=0, \\
                                                           \;    \sigma_f(\alpha) \; & 1\leq i\leq l, \end{cases}
\]
the (mean-)folding of $(R_f, \sigma_f)$ and $(R_{af},\sigma_{af})$ are given in the following table:
 set $H_f=\langle \sigma_{f}\rangle$ and $H_{af}=\langle \sigma_{af}\rangle$.  \\
 \vskip 0.2In 
 
\begin{table}[h] \caption{(Mean-)Folding} \label{table:folding}
\begin{center}
\begin{tabular}{|c||c|c|c|c|} \hline 
Type of $R_f$ & $A_{2l-1}$ & $D_{l+1}$ & $E_6$ & $D_4$  \\ \hline \hline

\begin{tikzpicture}[scale=0.5]
\draw (0,0.2) node {$\sigma_f$};
\draw (0,0) node {$\phantom{i}$};
\draw (0,-0.6) node[below] {$\phantom{a}$}; 
\end{tikzpicture}
& 

\begin{tikzpicture}[scale=0.8]
\draw (0,-0.5) circle (0.1); \draw (0.1,-0.5) -- (0.9,-0.5); \draw (1,-0.5) circle (0.1); 
\draw (1.1, -0.5) -- (1.5, -0.5); \draw [dashed] (1.5,-0.5) -- (2.5,-0.5); \draw (2.5,-0.5) -- (2.9,-0.5); 
\draw (3,-0.5) circle (0.1); 
\draw (3.07,-0.43) -- (3.43, -0.07); 
\draw (3.5,0) circle (0.1); 
\draw (3.43,0.07) -- (3.07,0.43);
\draw (0,0.5) circle (0.1); \draw (0.1,0.5) -- (0.9,0.5); \draw (1,0.5) circle (0.1); 
\draw (1.1, 0.5) -- (1.5, 0.5); \draw [dashed] (1.5,0.5) -- (2.5,0.5); \draw (2.5,0.5) -- (2.9,0.5); 
\draw (3,0.5) circle (0.1); 
\draw [dotted, thick, <->] (0,-0.3) -- (0,0.3);
\draw [dotted, thick, <->] (1,-0.3) -- (1,0.3); 
\draw [dashed] (1.5,0) -- (2.5,0);
\draw [dotted, thick, <->] (3,-0.3) -- (3,0.3);
\draw (2,-0.6) node[below] {$\phantom{a}$}; 
\end{tikzpicture}

& 

\begin{tikzpicture}[scale=0.8]
\draw (0,0) circle (0.1); \draw (0.1,0) -- (0.9,0); \draw (1,0) circle (0.1); 
\draw (1.1, 0) -- (1.5, 0); \draw [dashed] (1.5,0) -- (2.5,0); \draw (2.5,0) -- (2.9,0); 
\draw (3,0) circle (0.1); 
\draw (3.07,0.07) -- (3.43, 0.43); 
\draw (3.5,0.5) circle (0.1); 
\draw (3.07,-0.07) -- (3.47,-0.43);
\draw (3.5,-0.5) circle (0.1); 
\draw [dotted, thick, <->] (3.7,-0.35) ..controls +(0.2,0.2) and +(0.2,-0.2).. (3.7,0.35);
\draw (2,-0.6) node[below] {$\phantom{a}$}; 
\end{tikzpicture}

& 

\begin{tikzpicture}[scale=0.8]
\draw (0,0) circle (0.1); \draw (0.1,0) -- (0.9,0);\draw (1,0) circle (0.1); 
\draw (1.09,0.045) -- (1.91,0.455); \draw (2,0.5) circle (0.1);
\draw (1.09,-0.045) -- (1.91,-0.455); \draw (2,-0.5) circle (0.1);
\draw (2.1,0.5) -- (2.9,0.5); \draw (3,0.5) circle (0.1);
\draw (2.1,-0.5) -- (2.9,-0.5); \draw (3,-0.5) circle (0.1);
\draw [dotted, thick, <->] (2,-0.3) -- (2,0.3);
\draw [dotted, thick, <->] (3,-0.3) -- (3,0.3);`
\draw (2,-0.6) node[below] {$\phantom{a}$}; 
\end{tikzpicture}

& 

\begin{tikzpicture}[scale=0.8]
\draw (0,0) circle (0.1); 
\draw (1.5,0) circle (0.1); \draw (0.1,0) -- (1.4,0);
\draw (1.5,1) circle (0.1); \draw (0.083,0.055) -- (1.417,0.945);
\draw (1.5,-1) circle (0.1); \draw (0.083,-0.055) -- (1.417,-0.945);
\draw [dotted, thick, ->] (1.7,-0.85) ..controls +(0.3,0.3) and +(0.3,-0.3).. (1.7,0.85);
\draw [dotted, thick, ->] (1.3,0.75) ..controls +(-0.2,-0.2) and +(-0.2,0.2).. (1.4,0.15);
\draw [dotted, thick, ->] (1.3,-0.15) ..controls +(-0.2,-0.2) and +(-0.2,0.2).. (1.4,-0.75);
\draw (1,1.2) node {$\phantom{a}$};
\end{tikzpicture}

  \\ \hline \hline
Type of $\Tr^{H_f}(R_f)$ & $B_l$ & $C_l$ & $F_4$ & $G_2$ \\ \hline
Type of $\Tr_{H_f}(R_f)$ & $C_l$ & $B_l$ & $F_4$ & $G_2$ \\ \hline \hline

Type of $\Tr^{H_{af}}(R_{af})$ & $B_l^{(2)}$ & $C_l^{(2)}$ & $F_4^{(2)}$ & $G_2^{(3)}$    \\ \hline
Type of $\Tr_{H_{af}}(R_{af})$ & $C_l^{(1)}$ & $B_l^{(1)}$ & $F_4^{(1)}$ & $G_2^{(1)}$  \\ \hline
\end{tabular}
\end{center}
\end{table}

We will use the fact that any twisted affine root system, except for $BC_l^{(2)}$, can be obtained as a folding of an $ADE$-type affine root system.
\begin{rem} 
Indeed, one can realize the root system $R_{af}$ of type $BC_l^{(2)}$ as follows. Let $\sigma_{af}$ be a diagram automorphism $D_{2l+2}^{(1)}$ of order $4$. Then, 
\[  \Tr^{\langle \sigma_{af}\rangle}(R(D_{2l+2}^{(1)})) \cong \Tr_{\langle \sigma_{af}\rangle}(R(D_{2l+2}^{(1)})) \cong R(BC_l^{(2)}).
\]
This automorphism $\sigma_{af}$, admitting exactly one fixed point, permutes the $4$ extremal nodes and hence is not of type described above. 
\end{rem}


\subsection{Affine Weyl groups} \label{sect:Aff-Weyl}

Here we extend the pair $(F,I)$ considered in \S \ref{sect:aff-root-sys} to the real vector space $\widehat{F}$ by adding the dual of $\rad(I)$; $\widehat{F}:=F \oplus \rad(I)^\ast$ and a symmetric bilinear form $\hat{I}$ such that $\big(\rad(I)^\ast \big)^{\perp}=F_f \oplus \rad(I)^\ast$ and its restriction to $\rad(I) \times \rad(I)^\ast$ is the dual pairing. A node $\alpha_i$ of the Dynkin diagram of $R_{af}$ is called \textbf{special index} if $\delta-a_i\alpha_i \in R_{af}$.  \\

Let $\widehat{W}(R_{af})$ be the subgroup of $\mathrm{O}(\widehat{F},\hat{I})$ generated by reflections $\hat{s}_\alpha$ ($\alpha \in R_{af} \subset F \subset \widehat{F}$) defined by
\[ \hat{s}_\alpha(\lambda)=\lambda - \hat{I}(\lambda, \alpha^\vee)\alpha \qquad \lambda \in \widehat{F}. 
\]
It is clear that the finite subgroup generated by $\hat{s}_\alpha$ ($\alpha \in R_f$) is isomorphic to the finite Weyl group $W(R_f)$. 
For a non-isotropic vector $\gamma \in F\subset \widehat{F}$, define $t_\gamma \in \mathrm{O}(\widehat{F},\hat{I})$ (cf. \cite[(6.5.2)]{Kac1990}) by
\[ t_\gamma(\lambda)=\lambda+\hat{I}(\lambda, \delta)\gamma - \Big( \hat{I}(\lambda, \gamma)+\frac{1}{2}\hat{I}(\gamma, \gamma)\hat{I}(\lambda,\delta)\Big)\delta.
\]
By direct computation, it can be verified that 
\begin{enumerate}
\item $w t_\gamma w^{-1} =t_{w(\gamma)}$ for any $w \in W(R_f)$,  and 
\item $t_{\gamma_1}t_{\gamma_2}=t_{\gamma_1+\gamma_2}$ for any $\gamma_1, \gamma_2 \in F$. 
\end{enumerate}
First, we consider the affine root systems $R_{af}$, where $\alpha_0$ is a speical index, i.e., $R_{af}$ is not of type $BC_l^{(2)}$. In this case, we describe the group $\widehat{W}(R_{af})$ under the splitting $F=F_f \oplus \rad(I)$. 

For an untwisted $R_{af}$, since $\hat{s}_{\alpha_0}\hat{s}_{\delta-\alpha_0}=t_{\theta^\vee}$ and $\theta \in R_f$ is a long root, it follows that $\theta^\vee \in (R_f^\vee)_s$ and its $W(R_f)$-orbit generates the coroot lattice $Q(R_f^\vee)$. Thus we have
\begin{equation}\label{Weyl-untwisted}
\widehat{W}(R_{af}) \cong W(R_f) \ltimes Q(R_f^\vee). 
\end{equation}
Likewise, for a twisted $R_{af}$, not of type $BC_l^{(2)}$, as $\hat{s}_{\alpha_0}\hat{s}_{\delta-\alpha_0}=t_{\theta_s^\vee}$ and $\theta_s\in R_f$ is a short root, it follows that the $W(R_f)$-orbit of $\theta_s^\vee=\dfrac{2}{\hat{I}(\theta_s,\theta_s)}\theta_s$ generate the lattice $\dfrac{2}{\hat{I}(\theta_s,\theta_s)}Q(R_f)$, i.e., 
\begin{equation}\label{Weyl-twisted}
\widehat{W}(R_{af}) \cong W(R_f) \ltimes \Bigg(\frac{2}{\hat{I}(\theta_s,\theta_s)}Q(R_f)\Bigg). 
\end{equation}

Second, we consider the affine root systems $R_{af}$ of type $BC_l^{(2)}$. In this case, as the unique simple short root $\alpha_l$ is the only special index,  we shall set $\Pi_f'=\{\alpha_i\}_{0\leq i\leq l-1}$, $R_f'=R_{af} \cap \Z \Pi'_f\cong R(C_l)$ and $F_f'=\bigoplus_{i=0}^{l-1} \R \alpha_i$, and describe the group $\widehat{W}(R_{af})$ under the splitting $F=F_f' \oplus \rad(I)$.   \\
As $\hat{s}_{\alpha_l}\hat{s}_{\delta-2\alpha_l}=t_{\theta^\vee}$, where $\theta \in R_f'$ is a highest root hence is a long root, it follows that $\theta^\vee \in (R_f')^\vee_s$ and its $W(R_f')$-orbit generates the coroot lattice $Q((R_f')^\vee)$. Thus, similarly to \eqref{Weyl-untwisted},  we have 
\begin{equation}\label{Weyl-BC}
\widehat{W}(R_{af}) \cong W(R_f') \ltimes Q\big((R_f')^\vee)\big). 
\end{equation} 

\begin{rem} It follows from the above isomorphisms that
the Weyl group of $R_{af}$, which is a subgroup of $\mathrm{O}(F,I)$, is isomorphic to $\widehat{W}(R_{af})$.
\end{rem}

Let $M$ be the lattice in the right hand side of above isomorphism $\widehat{W}(R_{af}) \cong W_f \ltimes M$.  The above isomorphism is clearly given by associating $(u, \gamma) \in W_f \times M$ with $ut_\gamma \in \widehat{W}(R_{af})$. It is clear that 
\[ \det{}_{\widehat{F}}(ut_\gamma)=\det{}_{\widehat{F}}(u)=\det{}_{F_f}(u), \]
that is, the determinant of an element $ut_\gamma$ of $\widehat{W}(R_{af})$ depends only  on its finite part.\\

\section{The Macdonald identities}
In this section, after recalling the denominator identity for finite and affine root systems and the Macdonald identity for untwisted affine root systems, we prove the Macdonald identity for twisted affine root systems, which is not of type $BC_l^{(2)}$, in a unified manner. The $BC_l^{(2)}$ case will be treated separately. We often use the symbols introduced in the previous section without reintroducing them. 
\subsection{Denominator identity}
Here, we recall the denominator identity for finite and affine root systems, separately. \\

First we recall the so-called \textbf{Weyl vector}. Let $R_f$ be a reduced finite root system belonging to $(F_f,I_f)$, $\Pi_f=\{\alpha_i\}_{1\leq i \leq l} \subset R_f$ the set of simple roots of $R_f$ and $R_f^+$ the set of positive roots. Define the Weyl vector $\rho_f \in F_f$ by
\[
\rho_f=\frac{1}{2}\sum_{\alpha \in R_f^+} \alpha. \]
It can be checked that $I_f(\rho_f,\alpha_i^\vee)=1$ for any $1\leq i\leq l$.  
For an affine root system $R_{af}$ belonging to $(F,I)$ whose quotient root system $R_{af}/\rad(I)$ is isomorphic to $R_f$ (not of type $BC_l$), let $\Pi_{af}$ be the set of simple roots recalled in \S \ref{sect:aff-root-sys}. Regard $R_{af}$ as a subset of $\widehat{F}$. For $0\leq i \leq l$, let $\Lambda_i \in F_f \oplus \rad(I)^\ast$ be the vector satisfying 
$\hat{I}(\Lambda_i,\alpha_j^\vee)=\delta_{i,j}$. The Weyl vector $\rho \in \widehat{F}$ is defined by
\[ \rho=\sum_{i=0}^l \Lambda_i. \]
It appears that $\rho-\rho_f \in \rad(I)^\ast$. Notice that this construction depends on the splitting $F=F_f\oplus \rad(I)$. \\
For the case when $R_{af}$ is of type $BC_l^{(2)}$, we use another splitting $F=F_f' \oplus \rad(I)$ defined in \S \ref{sect:Aff-Weyl}.
Let $\Pi_{af}$ be the set of simple roots recalled in \S \ref{sect:aff-root-sys}.  
For $0\leq i \leq l$, let $\Lambda_i' \in F_f' \oplus \rad(I)^\ast$ be the vector satisfying 
$\hat{I}(\Lambda_i',\alpha_j^\vee)=\delta_{i,j}$. The Weyl vector $\rho \in \widehat{F}$ is defined by
\[ \rho=\sum_{i=0}^l \Lambda_i'. \]
Recall the subset $\Pi_f'$ of $\Pi_{af}$ defined in  \S \ref{sect:Aff-Weyl}. The Weyl vector $\rho_f'$ of $R_f'$ satisfies $\rho-\rho_f' \in \rad(I)^\ast$. 
\\

Second, we recall a certain algebra $\mathcal{E}$ over $\C$. For $\lambda \in \widehat{F}$, set 
\[ D(\lambda)=\lambda-\Z_{\geq 0}\Pi_f=\big\{ \, \mu \in \widehat{F}\, \big\vert \, \exists\, m_i \in \Z_{\geq 0}\; \text{s.t.}\; \lambda-\mu=\sum_{i=0}^l m_i\alpha_i. \, \big\}. \]
The algebra $\mathcal{E}$ consists of the elements of the form $\sum_{\lambda \in \widehat{F}} c_\lambda e^\lambda$ such that there exists a finite number of vectors $\mu_1, \mu_2, \cdots, \mu_r \in \widehat{F}$ satisfying
\[ \big\{ \,\lambda \in \widehat{F}\, \big\vert \, c_\lambda \neq 0\, \big\} \subset \bigcup_{i=1}^r D(\mu_i). \]
The algebra structure on $\mathcal{E}$ is defined by $e^{\lambda} \cdot e^{\mu}=e^{\lambda+\mu}$.  \\

Now, we recall the so-called denominator identity.  The denominator identity of a finite root system $R_f$ is
\begin{equation}\label{eq:denominator-id-fin}
e^{\rho_f}\prod_{\alpha \in R_f^+}\big(1-e^{-\alpha}\big)=\sum_{u \in W(R_f)} \det{}_{F_f}(u)e^{u(\rho_f)}. 
\end{equation}
Notice that the left hand side can be rewritten as follows:  
\[ e^{\rho_f}\prod_{\alpha \in R_f^+}(1-e^{-\alpha})=\prod_{\alpha \in R_f^+}\Big( e^{\frac{1}{2}\alpha}-e^{-\frac{1}{2}\alpha}\Big). \]
For an affine root system $R_{af}$ of untwisted type, it is
\begin{equation}\label{eq:denominator-id-affine-I}
\begin{split}
&e^{\rho-\frac{\hat{I}(\rho,\rho)}{2\hat{I}(\rho,\delta)}\delta}\prod_{\alpha \in R_f^+}\big(1-e^{-\alpha}\big)
\prod_{n=1}^\infty \bigg( \big(1-e^{-n\delta}\big)^l \prod_{\alpha \in R_f}\big(1-e^{-\alpha-n\delta}\big) \bigg)\\
=
&\sum_{w \in \widehat{W}(R_{af})} \det{}_{\widehat{F}}(w)e^{w(\rho)-\frac{\hat{I}(\rho,\rho)}{2\hat{I}(\rho,\delta)}\delta}.
\end{split}
\end{equation}
For a twisted affine root system $R_{af}$ not of type $BC_l^{(2)}$, 
\begin{equation}\label{eq:denominator-id-affine-II}
\begin{split}
&e^{\rho-\frac{\hat{I}(\rho,\rho)}{2\hat{I}(\rho,\delta)}\delta}\prod_{\alpha \in R_f^+}\big(1-e^{-\alpha}\big)
\prod_{n=1}^\infty \bigg( \big(1-e^{-n\delta}\big)^{\vert \Pi_{f,s} \vert} \big(1-e^{-tn\delta}\big)^{\vert \Pi_{f,l} \vert} \\
& \phantom{e^{\rho}\prod_{\alpha \in R_f^+}\big(1-e^{-\alpha}\big)
\prod_{n=1}^\infty \bigg( }
\times \prod_{\alpha \in (R_f)_s}\big(1-e^{-\alpha-n\delta}\big) \prod_{\alpha \in (R_f)_l}\big(1-e^{-\alpha-tn\delta}\big) \bigg)\\
=
&\sum_{w \in \widehat{W}(R_{af})} \det{}_{\widehat{F}}(w)e^{w(\rho)-\frac{\hat{I}(\rho,\rho)}{2\hat{I}(\rho,\delta)}\delta},
\end{split}
\end{equation}
where we set $\Pi_{f,s}=\Pi_f \cap (R_f)_s$ and $\Pi_{f,l}=\Pi_f \cap (R_f)_l$. Here, the right hand side is a $\widehat{W}(R_{af})$ alternating sum of $e^{\rho-\frac{\hat{I}(\rho,\rho)}{2\hat{I}(\rho,\delta)}\delta}$, instead of $e^{\rho}$. This shift is made so that the vector $\rho':=\rho-\dfrac{\hat{I}(\rho,\rho)}{2\hat{I}(\rho,\delta)}\delta$ satisfy $\hat{I}(\rho',\rho')=0$. In this way, one may expect that the right hand side of \eqref{eq:denominator-id-affine-I} and \eqref{eq:denominator-id-affine-II} can be expressed in terms of an alternating sum involving $\theta$ constants which is the case as we will see in the next subsections.\\

\subsection{Macdonald identity I: untwisted case}\label{sect:Macdonald1}
Here, we recall how one can derive the Macdonald identity for an untwisted affine root system $R_{af}$ of type $X_l^{(1)}$. \\

First, we divide the both side of \eqref{eq:denominator-id-affine-I} by $e^{\rho}\prod_{\alpha \in R_f^+}(1-e^{-\alpha})$. Its left hand side simply becomes
\[
e^{-\frac{\hat{I}(\rho,\rho)}{2\hat{I}(\rho,\delta)}\delta}\prod_{n=1}^\infty \bigg( \big(1-e^{-n\delta}\big)^l \prod_{\alpha \in R_f}\big(1-e^{-\alpha-n\delta}\big) \bigg). \]
As for its right hand side, a direct computation shows
\begin{align*}
ut_\gamma(\rho)-\rho-\frac{\hat{I}(\rho,\rho)}{2\hat{I}(\rho,\delta)}\delta 
=
&u\big(\rho+\hat{I}(\rho,\delta)\gamma\big)-\rho-\frac{1}{2\hat{I}(\rho,\delta)}\cdot \hat{I}\big(\rho+\hat{I}(\rho,\delta)\gamma, \rho+\hat{I}(\rho,\delta)\gamma\big)\delta \\
=
&u\big(\rho_f+\hat{I}(\rho,\delta)\gamma\big)-\rho_f-\frac{1}{2\hat{I}(\rho,\delta)}\cdot \hat{I}\big(\rho+\hat{I}(\rho,\delta)\gamma, \rho+\hat{I}(\rho,\delta)\gamma\big)\delta, 
\end{align*}
 for $u \in W(R_f)$ and $\gamma \in Q(R_f^{\vee})$, by \eqref{Weyl-untwisted}, it becomes
 \[\sum_{\gamma \in Q(R_f^\vee)}
 \bigg(\frac{\sum_{u \in W(R_f)}\det_{F_f}(u)e^{u\big(\rho_f+\hat{I}(\rho,\delta)\gamma\big)-\rho_f}}{\prod_{\alpha \in R_f^+}\big(1-e^{-\alpha}\big)}\bigg)
 e^{-\frac{1}{2\hat{I}(\rho,\delta)}\cdot I_f\big(\rho+\hat{I}(\rho,\delta)\gamma, \rho+\hat{I}(\rho,\delta)\gamma\big)\delta}.
 \]
 Hence, we obtain
 \begin{equation}\label{Macdonald-step1-I}
 \begin{split}
 & e^{-\frac{\hat{I}(\rho,\rho)}{2\hat{I}(\rho,\delta)}\delta}\prod_{n=1}^\infty \bigg( \big(1-e^{-n\delta}\big)^l \prod_{\alpha \in R_f}\big(1-e^{-\alpha-n\delta}\big) \bigg) \\
 =
 &
 \sum_{\gamma \in \hat{I}(\rho,\delta)Q(R_f^\vee)}
 \bigg(\frac{\sum_{u \in W(R_f)}\det_{F_f}(u)
 e^{u\big(\rho_f+\gamma\big)}}{e^{\rho_f}\prod_{\alpha \in R_f^+}\big(1-e^{-\alpha}\big)}\bigg)
 e^{-\frac{1}{2\hat{I}(\rho,\delta)}\cdot I_f \big(\rho+\gamma, \rho+\gamma\big)\delta}.
 \end{split}
 \end{equation}

Second, we consider the specialization $e^{\alpha_i}\, \mapsto\, 1$ ($1\leq i\leq l$). For this purpose, let us calculate $\hat{I}(\rho,\delta)$ and $\hat{I}(\rho,\rho)$. By definition, we have
\begin{equation}\label{I-value-1}
 \hat{I}(\rho,\delta)=\hat{I}(\rho,\alpha_0)+\hat{I}(\rho,\theta)=\frac{\hat{I}(\theta,\theta)}{2}\bigg( 1+\sum_{i=1}^l a_i^\vee \bigg)=\frac{\hat{I}(\theta,\theta)}{2}\dCox{X_l^{(1)}},
\end{equation}
from which it follows from Freudenthal de Vries' strange formula (cf. \S 47.10 of \cite{Freudenthal1969}, \cite{FeganSteer1989} or Exercise 6.4 of \cite{MoodyPianzola1995}) that 
\[
\frac{\hat{I}(\rho,\rho)}{2\hat{I}(\rho,\delta)}=
\frac{2}{I_f(\theta,\theta)} \cdot \frac{I_f(\rho_f,\rho_f)}{2\dCox{X_l^{(1)}}}=
\frac{\dim \fg(R_f)}{24},
\]
where $\fg(R_f)$ is the simple Lie algebra of type $X_l$. Thus, the left hand side of \eqref{Macdonald-step1-I} becomes
\[
e^{-\frac{\dim \fg(R_f)}{24}\delta}\prod_{n=1}^\infty \big(1-e^{-n\delta} \big)^{l+\vert R_f\vert} \\
=
\Big( e^{-\frac{1}{24}\delta}\prod_{n=1}^\infty\big(1-e^{-n\delta} \big)\Big)^{\dim \fg(R_f)}=\eta\big(e^{-\delta}\big)^{\dim \fg(R_f)},
\]
where $\eta(q)$ is the Dedekind eta function: $\eta(q)=q^{\frac{1}{24}}\prod_{n=1}^\infty(1-q^n)$ (usually, viewed as a function on $\tau$ 
in the upper half plane, where $q=e^{2\pi i \tau}$).  Recalled that the \textbf{weight lattice} of $R_f$ is defined by 
\[ P(R_f)=\big\{ \, \lambda \in F_f\, \big\vert\, I_f(\lambda, \alpha_i^\vee) \in \Z\; \;1 \leq \,\forall\, i\leq l\, \big\}. \]
For the right hand side, since $Q(R_f^\vee)$ is a sublattice of $\dfrac{2}{I_f(\theta,\theta)}Q(R_f) \subset F_f$, it follows from \eqref{I-value-1} that $\hat{I}(\rho,\delta)Q(R_f^\vee) \subset \dCox{X_l^{(1)}} Q(R_f)$. Hence, it is sufficient to compute the specialization $e^{\alpha_i} \mapsto 1$ ($1\leq i\leq l$) of
\[ \frac{\sum_{u \in W(R_f)}\det_{F_f}(u)
e^{u\big(\rho_f+\lambda \big)}}{e^{\rho_f}\prod_{\alpha \in R_f^+}\big(1-e^{-\alpha}\big)}
\in \bigoplus_{\mu \in P(R_f)} \R e^\mu, \]
 for any $\lambda \in P(R_f)$. 
Regard $e^{\mu}$ ($\mu \in P(R_f)$) as a function on $F_f$ defined by $e^{\mu}(x)=e^{I_f(\mu,x)}$, and evaluate the above fraction at $t \rho_f$ ($t \in \R$). As 
\begin{align*}
& \Big(\sum_{u \in W(R_f)}\det{}_{F_f}(u)e^{u(\lambda+\rho_f)}\Big)(t\rho_f) \\
=
&\sum_{u \in W(R_f)}\det{}_{F_f}(u)e^{I_f\big(u(\lambda+\rho_f),t\rho_f\big)}=\sum_{u \in W(R_f)}\det{}_{F_f}(u^{-1})e^{I_f\big(u^{-1}(\rho_f), t(\lambda+\rho_f)\big)} \\
=
&\Big(\sum_{u \in W(R_f)}\det{}_{F_f}(u)e^{u(\rho_f)}\Big)\big(t(\lambda+\rho_f)\big), 
\end{align*}
it follows from \eqref{eq:denominator-id-fin} that
\begin{align*}
\Bigg(\frac{\sum_{u \in W(R_f)}\det_{F_f}(u)
e^{u\big(\rho_f+\lambda \big)}}{e^{\rho_f}\prod_{\alpha \in R_f^+}\big(1-e^{-\alpha}\big)}\Bigg)(t\rho_f)&=
\prod_{\alpha \in R_f^+}\bigg( 
\frac{e^{\frac{t}{2}I_f(\lambda+\rho_f,\alpha)}-e^{-\frac{t}{2}I_f(\lambda+\rho_f,\alpha)}}{e^{\frac{t}{2}I_f(\rho_f,\alpha)}-
e^{-\frac{t}{2}I_f(\rho_f,\alpha)}}\bigg) \; \\
&\overset{t \rightarrow 0}{\longrightarrow} \; 
\prod_{\alpha \in R_f^+} \frac{I_f(\lambda+\rho_f,\alpha)}{I(\rho_f,\alpha)}.
\end{align*}
Setting
\[ d_{X_l}(\lambda)=\prod_{\alpha \in R_f^+} \frac{I_f(\lambda+\rho_f,\alpha)}{I(\rho_f,\alpha)}
\]
for $\lambda \in P(R_f)$, we see that the right hand side of \eqref{Macdonald-step1-I} becomes
\[ \sum_{ \gamma \in \hat{I}(\rho,\delta)Q(R_f^\vee)}d_{X_l}(\gamma)e^{-\frac{I_f(\rho_f+\gamma,\rho_f+\gamma)}{2\dCox{X_l^{(1)}}}\cdot \frac{2}{I_f(\theta,\theta)}\delta}.
\]
Thus, we obtain the so-called Macdonald identity:
\begin{thm}[cf. Theorem 8.7 in \cite{Macdonald1972}]
Let $R_f$ be a finite root system of type $X_l$ and $R_{af}$ be an affine root system of type $X_l^{(1)}$. 
\begin{equation}\label{Macdonald-id-1}
\eta\big(e^{-\delta}\big)^{\dim \fg(R_f)}=
\sum_{ \gamma \in \hat{I}(\rho,\delta)Q(R_f^\vee)}d_{X_l}(\gamma)e^{-\frac{I_f(\rho_f+\gamma,\rho_f+\gamma)}{2\dCox{X_l^{(1)}}}\cdot \frac{2}{I_f(\theta,\theta)}\delta}.
\end{equation}
\end{thm}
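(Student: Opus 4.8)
The plan is to derive the identity from the affine denominator identity \eqref{eq:denominator-id-affine-I} in two stages: first reorganize the affine Weyl-group sum using the semidirect-product structure \eqref{Weyl-untwisted}, and then apply the principal specialization $e^{\alpha_i}\mapsto 1$ for $1\leq i\leq l$, which kills the finite directions and leaves an identity purely in the imaginary direction $\delta$.

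First I would divide both sides of \eqref{eq:denominator-id-affine-I} by $e^{\rho}\prod_{\alpha\in R_f^+}(1-e^{-\alpha})$. On the left this strips off the finite factor and isolates the imaginary-root product. On the right I would write each $w\in\widehat{W}(R_{af})$ uniquely as $w=ut_\gamma$ with $u\in W(R_f)$ and $\gamma\in Q(R_f^\vee)$ via \eqref{Weyl-untwisted}, and compute $ut_\gamma(\rho)-\rho-\tfrac{\hat{I}(\rho,\rho)}{2\hat{I}(\rho,\delta)}\delta$ directly from the formula defining $t_\gamma$. The point is that this exponent splits cleanly into a finite-part piece $u\big(\rho_f+\hat{I}(\rho,\delta)\gamma\big)-\rho_f$ and a $\delta$-coefficient that is a quadratic form in $\gamma$; collecting the $W(R_f)$-sum over the finite part against the finite denominator produces, for each $\gamma$, a normalized alternating-character quotient, yielding \eqref{Macdonald-step1-I}.

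Next I would evaluate the left-hand side explicitly. Using $\delta=\alpha_0+\theta$ together with the colabels gives $\hat{I}(\rho,\delta)=\tfrac{\hat{I}(\theta,\theta)}{2}\dCox{X_l^{(1)}}$ as in \eqref{I-value-1}, and combining the value of $\hat{I}(\rho,\rho)$ with the Freudenthal--de Vries strange formula identifies $\tfrac{\hat{I}(\rho,\rho)}{2\hat{I}(\rho,\delta)}=\tfrac{\dim\fg(R_f)}{24}$. Since the exponent count of the product is $l+\vert R_f\vert=\dim\fg(R_f)$, the entire left-hand side collapses to $\big(e^{-\frac{1}{24}\delta}\prod_{n\geq 1}(1-e^{-n\delta})\big)^{\dim\fg(R_f)}=\eta\big(e^{-\delta}\big)^{\dim\fg(R_f)}$.

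The hard part will be the specialization $e^{\alpha_i}\mapsto 1$ on the right. Each summand is an alternating Weyl sum divided by the finite denominator, and one must check that this quotient admits a finite, computable limit under the specialization. My approach would be to realize $e^\mu$ (for $\mu\in P(R_f)$) as the function $x\mapsto e^{I_f(\mu,x)}$ on $F_f$, evaluate the quotient at $t\rho_f$, and let $t\to 0$. A change of summation variable $u\mapsto u^{-1}$ inside the alternating sum, using $\det_{F_f}(u)=\det_{F_f}(u^{-1})$ and the symmetry of $I_f$, lets me apply the finite denominator identity \eqref{eq:denominator-id-fin} to rewrite both numerator and denominator as Weyl products of the form $\prod_{\alpha\in R_f^+}\big(e^{\frac{t}{2}I_f(\cdot,\alpha)}-e^{-\frac{t}{2}I_f(\cdot,\alpha)}\big)$; the limit $t\to 0$ then reduces each factor to a ratio of linear forms, producing exactly $d_{X_l}(\gamma)=\prod_{\alpha\in R_f^+}\tfrac{I_f(\gamma+\rho_f,\alpha)}{I_f(\rho_f,\alpha)}$. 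Finally, substituting $\hat{I}(\rho,\delta)$ into the $\delta$-exponent and using $Q(R_f^\vee)\subset\tfrac{2}{I_f(\theta,\theta)}Q(R_f)$ to confirm the summation lattice is $\hat{I}(\rho,\delta)Q(R_f^\vee)\subset P(R_f)$ assembles the right-hand side of \eqref{Macdonald-id-1}, completing the proof.
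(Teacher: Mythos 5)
Your proposal is correct and follows essentially the same route as the paper: divide the affine denominator identity by $e^{\rho}\prod_{\alpha\in R_f^+}(1-e^{-\alpha})$, decompose $\widehat{W}(R_{af})$ via \eqref{Weyl-untwisted} to reach \eqref{Macdonald-step1-I}, identify $\tfrac{\hat{I}(\rho,\rho)}{2\hat{I}(\rho,\delta)}=\tfrac{\dim\fg(R_f)}{24}$ by the strange formula, and evaluate the Weyl-character quotients at $t\rho_f$ with $t\to 0$ to produce $d_{X_l}(\gamma)$. No substantive differences from the paper's argument.
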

Notice that $\dim \fg(R_f)=(\Cox{X_l}+1)\big\vert \Pi_f\big\vert$  by Corollary 6.8 and Theorem 8.4 of \cite{Kostant1959}.
\subsection{Macdonald identity II: twisted case}\label{sect:Macdonald2}
Here, we basically follow the same idea as in the last subsection to obtain the Macdonald identity for a twisted affine root system $R_{af}=R(X_l^{(t)})$
not of type $BC_l^{(2)}$. \\

First, we divide the both side of \eqref{eq:denominator-id-affine-II} by $e^{\rho}\prod_{\alpha \in R_f^+}(1-e^{-\alpha})$. Its left hand side simply becomes
\[
e^{-\frac{\hat{I}(\rho,\rho)}{2\hat{I}(\rho,\delta)}\delta}\prod_{n=1}^\infty \bigg( \big(1-e^{-n\delta}\big)^{\vert \Pi_{f,s} \vert}\big(1-e^{-tn\delta}\big)^{\vert \Pi_{f,l} \vert}
\prod_{\alpha \in (R_f)_s}\big(1-e^{-\alpha-n\delta}\big)\prod_{\alpha \in (R_f)_l}\big(1-e^{-\alpha-tn\delta}\big)  \bigg). \]
As for its right hand side, by a similar computation in the last subsection together with \eqref{Weyl-twisted}, it becomes
 \[\sum_{\gamma \in \frac{2}{\hat{I}(\theta_s,\theta_s)}Q(R_f)}
 \bigg(\frac{\sum_{u \in W(R_f)}\det_{F_f}(u)e^{u\big(\rho_f+\hat{I}(\rho,\delta)\gamma\big)-\rho_f}}{\prod_{\alpha \in R_f^+}\big(1-e^{-\alpha}\big)}\bigg)
 e^{-\frac{1}{2\hat{I}(\rho,\delta)}\cdot I_f\big(\rho+\hat{I}(\rho,\delta)\gamma, \rho+\hat{I}(\rho,\delta)\gamma\big)\delta}.
 \]
 Hence, we obtain
 \begin{equation}\label{Macdonald-step1-II}
 \begin{split}
 &
 e^{-\frac{\hat{I}(\rho,\rho)}{2\hat{I}(\rho,\delta)}\delta}\prod_{n=1}^\infty \bigg( \big(1-e^{-n\delta}\big)^{\vert \Pi_{f,s} \vert}\big(1-e^{-tn\delta}\big)^{\vert \Pi_{f,l} \vert}
\prod_{\alpha \in (R_f)_s}\big(1-e^{-\alpha-n\delta}\big)\prod_{\alpha \in (R_f)_l}\big(1-e^{-\alpha-tn\delta}\big)  \bigg) \\
 =
 &\sum_{\gamma \in 2\frac{\hat{I}(\rho,\delta)}{\hat{I}(\theta_s,\theta_s)}Q(R_f)}
 \bigg(\frac{\sum_{u \in W(R_f)}\det_{F_f} (u)
 e^{u\big(\rho_f+\gamma\big)}}{e^{\rho_f}\prod_{\alpha \in R_f^+}\big(1-e^{-\alpha}\big)}\bigg)
 e^{-\frac{1}{2\hat{I}(\rho,\delta)}\cdot I_f \big(\rho+\gamma, \rho+\gamma\big)\delta}.
 \end{split}
 \end{equation}
 
 Second, we consider the specialization $e^{\alpha_i}\, \mapsto\, 1$ ($1\leq i\leq l$). For this purpose, let us calculate $\hat{I}(\rho,\rho)$, and $\hat{I}(\rho,\delta)$. A similar computation to \eqref{I-value-1} yields
\begin{equation}\label{I-value-2}
 \hat{I}(\rho,\delta)=\frac{\hat{I}_f(\theta_s,\theta_s)}{2}\dCox{X_l^{(t)}},
\end{equation}
In particular, we have
\[ \frac{\hat{I}(\rho,\rho)}{2\hat{I}(\rho,\delta)}=\frac{2}{I_f(\theta_s,\theta_s)} \cdot \frac{I_f(\rho_f,\rho_f)}{2\dCox{X_l^{(t)}}}. \]
 Let us compute $\dfrac{I_f(\rho_f,\rho_f)}{2\dCox{X_l^{(t)}}}$. 
 For this purpose, we realize $R_f$ and $R_{af}$ as the image of a folding,
explained in \S \ref{sect:folding}.  Let $\widetilde{F}_f$ be a real vector space, containing $F_f$ as a subspace, equipped with positive definite metric $\tilde{I}_f$ whose restriction to $F_f \times F_f$ gives $I_f$. Let $\widetilde{R}_f$ be the finite root system belonging to $(\widetilde{F}_f,\tilde{I}_f)$ with a set  of simple roots $\widetilde{\Pi}_f$ whose Dynkin diagram admits a non-trivial automorphism $\tilde{\sigma}_f$ such that one has 
$\widetilde{F}_f^{H}=F_f$, $\Tr^{H}(\widetilde{R}_f)=R_f$ and $\Tr^{H}(\widetilde{\Pi}_f)=\Pi_f$, where we set $H=\langle \tilde{\sigma}_f \rangle$. 
In particular, it implies $\Tr^{H}(\widetilde{R}_f^+)=R_f^+$. Hence, in particular, we have the following:
\begin{enumerate}
\item
\[ (R_f)_s=\big\{\, \Tr^H(\alpha)\, \big\vert \, \alpha \in \big(\widetilde{R}_f\big)^{H}\, \big\}  \qquad \text{and} \qquad
   (R_f)_l=\big\{\, \Tr^H(\alpha)\, \big\vert \, \alpha \in \widetilde{R}_f \setminus \big(\widetilde{R}_f\big)^{H}\, \big\}.
\]
\item Set $\tilde{\rho}_f=\frac{1}{2}\sum_{\alpha \in \widetilde{R}_f^+} \alpha$. Then,
\[ \tilde{\rho}_f=\frac{1}{2}\bigg(\sum_{\alpha \in (\widetilde{R}_f^+)^H} \alpha+\sum_{\alpha \in \widetilde{R}_f^+ \setminus (\widetilde{R}_f^+)^H} \alpha\bigg)=\frac{1}{2}\Big( \sum_{\alpha \in (R_f^+)_s} \alpha+\sum_{\alpha \in (R_f^+)_l}\alpha\Big)=\rho_f. 
\]
\item Let $\tilde{\theta} \in \widetilde{R}_f^+$ be the highest weight. One has \; $\tilde{\theta}=\Tr^H(\tilde{\theta})=\theta_s$.
\end{enumerate}
Consider the real vector space $\widetilde{F}=\widetilde{F}_f\oplus \rad(I)$ equipped with a positive semidefinite metric $\tilde{I}$ such that $\tilde{I}\vert_{\widetilde{F}_f \times \tilde{F}_f}=\tilde{I}_f$ and $\tilde{I}\vert_{F \times F}= I$, where $F=F_f \oplus \rad(I)$ is clearly a subspace of $\widetilde{F}$. The automorphism $\tilde{\sigma}_f$ extends to the automorphism $\tilde{\sigma}_{af}$ of the affine root system $\widetilde{R}_{af}:=\widetilde{R}_f +\Z \delta$, fixing $\delta \in \rad(I)$.  
By definition, one has $\Tr^H(\widetilde{R}_{af})=R_{af}$. Let $Y_N$ be the type of the root system $\widetilde{R}_f$. Then, the type of the affine root system $\widetilde{R}_{af}$ is $Y_N^{(1)}$ and
\begin{lemma} $\dCox{X_l^{(t)}}=\dCox{Y_N^{(1)}}$. 
\end{lemma}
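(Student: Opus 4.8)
The plan is to recognize $\dCox{X_l^{(t)}}$ as an ordinary Coxeter number, so that the (mean-)folding dictionary of \S\ref{sect:folding} applies directly. The starting point is the elementary identity $\dCox{R}=\Cox{R^\vee}$ for any reduced affine root system $R$: by definition the colabels $(a_0^\vee,\dots,a_l^\vee)$ are the coprime positive solution of $(a_0^\vee,\dots,a_l^\vee)A=0$, and the Cartan matrix of the dual system $R_{af}^\vee$ (whose simple roots are the $\alpha_i^\vee$) is the transpose $A^{\mathrm t}$; hence the colabels of $R_{af}$ are exactly the labels of $R_{af}^\vee$, and summing them (the $i=0$ term $a_0^\vee$ included, in accordance with \eqref{I-value-1}) gives $\dCox{X_l^{(t)}}=\Cox{R_{af}^\vee}$.

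First I would compute $R_{af}^\vee$ via the folding/mean-folding duality of \S\ref{sect:folding}. Since $\widetilde R_{af}$ is of simply-laced type $Y_N^{(1)}$, it is self-dual and its diagram automorphism agrees with that of its dual, so applying $\Tr_{\langle\sigma^\vee\rangle}(R^\vee)=\big(\Tr^{\langle\sigma\rangle}(R)\big)^\vee$ with $R=\widetilde R_{af}$ and $\sigma^\vee=\sigma$ yields directly $R_{af}^\vee=\big(\Tr^{H}(\widetilde R_{af})\big)^\vee=\Tr_{H}(\widetilde R_{af})$. By Table~\ref{table:folding} this mean-folding is the \emph{untwisted} affine root system $Z_l^{(1)}$, where $Z_l\in\{C_l,B_l,F_4,G_2\}$ is the mean-folded finite type attached to $Y_N\in\{A_{2l-1},D_{l+1},E_6,D_4\}$.

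Next I would collapse both sides to finite Coxeter numbers. On the one hand, combining the previous steps with the fact (\S\ref{sect:aff-root-sys}) that the Coxeter number of an untwisted affine root system equals that of its finite quotient gives $\dCox{X_l^{(t)}}=\Cox{R_{af}^\vee}=\Cox{Z_l^{(1)}}=\Cox{Z_l}$. On the other hand, $Y_N^{(1)}$ is simply-laced, so its labels and colabels coincide and $\dCox{Y_N^{(1)}}=\Cox{Y_N^{(1)}}=\Cox{Y_N}$. Thus the lemma reduces to the equality $\Cox{Z_l}=\Cox{Y_N}$ of finite Coxeter numbers, which I would settle in the four relevant cases: $h(A_{2l-1})=h(C_l)=2l$, $h(D_{l+1})=h(B_l)=2l$, $h(E_6)=h(F_4)=12$, and $h(D_4)=h(G_2)=6$.

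The main obstacle is conceptual rather than computational: one must keep the conventions straight so that the dual of the twisted system $X_l^{(t)}$ is correctly identified with the untwisted system $Z_l^{(1)}$ (equivalently, that $\dCox{}$ is read off the transposed Cartan matrix with the same coprime normalization), since a normalization slip here would interchange the roles of $B_l$ and $C_l$ and break the matching. The closing equality $\Cox{Z_l}=\Cox{Y_N}$ is an instance of the invariance of the Coxeter number under folding, so one could invoke that principle in place of the four-case verification; a fully self-contained alternative would simply read the four values of $\dCox{X_l^{(t)}}$ and $\dCox{Y_N^{(1)}}$ off the standard tables, but the argument above explains why the coincidence is forced.
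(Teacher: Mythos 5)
Your argument is correct, but it is genuinely different from the proof in the paper. You reduce the lemma to the identity $\dCox{R}=\Cox{R^\vee}$, identify $R_{af}^\vee$ with the mean-folding $\Tr_{H}(\widetilde R_{af})=Z_l^{(1)}$ via the folding/mean-folding duality of \S\ref{sect:folding} and Table~\ref{table:folding}, and then check $\Cox{Z_l}=\Cox{Y_N}$ case by case (or invoke invariance of the Coxeter number under folding). The paper instead works entirely on the $\Tr^{H}$ side and never dualizes: it uses $\theta_s=\Tr^{H}(\tilde\theta)$ to deduce $(a_j)_{Y_N}=a_i$ whenever $\Tr^{H}(\beta_j)=\alpha_i$, computes $a_i^\vee=a_i$ for $\alpha_i\in\Pi_{f,s}$ and $a_i^\vee=ta_i$ for $\alpha_i\in\Pi_{f,l}$ directly from $\theta_s^\vee=\frac{2}{I_f(\theta_s,\theta_s)}\sum_i a_i\alpha_i$, and matches the two sums using that each non-fixed $H$-orbit of nodes has exactly $t$ elements; this gives a uniform, table-free identity between the colabels of $X_l^{(t)}$ and the labels of $Y_N^{(1)}$. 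Your route buys conceptual clarity (the coincidence is ``explained'' by duality exchanging the two foldings) at the cost of either a four-case verification or an appeal to an external invariance principle, plus some care with conventions that you rightly flag: the paper's displayed definition $\dCox{X_l^{(t)}}=\sum_{i=1}^l a_i^\vee$ omits $a_0^\vee$, but \eqref{I-value-1} and the lemma's own proof show the intended quantity is $1+\sum_{i=1}^l a_i^\vee=\sum_{i=0}^l a_i^\vee$, which is what makes $\dCox{R}=\Cox{R^\vee}$ and $\dCox{Y_N^{(1)}}=\Cox{Y_N}$ come out right. The paper's computation is the more self-contained of the two and is also the one reused immediately afterwards (the same orbit-counting gives $|\widetilde R_f|=|(R_f)_s|+t|(R_f)_l|$), so it earns its keep in the surrounding argument.
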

\begin{proof}
Let $\widetilde{\Pi}_{af}=\{ \beta_j\}_{0\leq j \leq N}$ be the set of simple roots of $\widetilde{R}_{af}$ containing $\widetilde{\Pi}_f$ and satisfying $\Tr^H(\widetilde{\Pi}_{af})=\Pi_{af}$.  Let $\{ (a_j)_{Y_N} \}_{0\leq j\leq N}$ and $\{ (a_j^\vee)_{Y_N}\}_{0\leq j \leq N}$ be the labels and colabels, respectively, of $\widetilde{\Pi}_{af}$. By definition, $\dCox{Y_N^{(1)}}=\sum_{j=0}^N (a_i^\vee)_{Y_N}$. Since $\widetilde{R}_f$ hence $\widetilde{R}_{af}$ is of simply-laced type, it follows that $(a_j^\vee)_{Y_N}=(a_j)_{Y_N}$ for any $0\leq j\leq N$, whereas for $R_f$, one has
\[
\theta_s^\vee=\frac{2}{I_f(\theta_s,\theta_s)}\theta_s=\frac{2}{I_f(\theta_s,\theta_s)}\Big(\sum_{i=1}^l a_i \alpha_i\Big)
=\sum_{ \alpha_i \in \Pi_{f,s}} a_i \alpha_i^\vee+\sum_{ \alpha_i \in \Pi_{f,l}}ta_i \alpha_i^\vee \]
which implies 
\[ a_i^\vee=\begin{cases} \; a_i \; & \alpha_i \in \Pi_{f,s}, \\ \; ta_i\; & \alpha_i \in \Pi_{f,l}. \end{cases} \]
As $\tilde{\theta}=\Tr^H(\widetilde{\theta})=\theta_s$, it follows that $(a_j)_{Y_N}=a_i$ if $\Tr^H(\beta_j)=\alpha_i$. Since, $t=\vert \langle \tilde{\sigma}_f \rangle \vert=\big\vert H.\beta \big\vert$ if $\beta \in \widetilde{R}_f \setminus \widetilde{R}_f^H$, we obtain
\begin{align*}
\dCox{X_l^{(t)}}=
&1+\sum_{i=1}^l a_i^\vee=1+\sum_{\substack{1\leq i\leq l \\ \alpha_i \in \Pi_{f,s}}}a_i+\sum_{\substack{1\leq i\leq l \\ \alpha_i \in \Pi_{f,l}}} ta_i 
\\
=
&1+\sum_{\substack{1\leq j\leq N \\ \beta_j \in \widetilde{\Pi}_{f}^H}}(a_j)_{Y_N}+\sum_{\substack{1\leq j\leq N \\ \beta_j \in \widetilde{\Pi}_f \setminus \widetilde{\Pi}_{f}^H}}(a_j)_{Y_N}=\dCox{Y_N^{(1)}}. 
\end{align*}
\end{proof}
By this lemma and Freudenthal de Vries' strange formula for $\widetilde{R}_f$, we obtain
\[ \frac{I_f(\rho_f,\rho_f)}{2\dCox{X_l^{(t)}}}=\frac{\tilde{I}_f(\tilde{\rho}_f,\tilde{\rho}_f)}{2\dCox{Y_N^{(1)}}}=\frac{\dim \fg(Y_N)}{24}\cdot \frac{\tilde{I}(\tilde{\theta},\tilde{\theta})}{2},
\]
which implies 
\[ \frac{\hat{I}(\rho,\rho)}{2\hat{I}(\rho,\delta)}=\frac{2}{I_f(\theta_s,\theta_s)} \cdot \frac{\dim \fg(Y_N)}{24} \cdot \frac{\tilde{I}_f(\tilde{\theta},\tilde{\theta})}{2}=\frac{\dim \fg(Y_N)}{24}.
\]
\vskip 0.1in
Now, we compute $\vert (R_f)_s \vert$ and $\vert (R_f)_l \vert$.  Let $c \in W(R_f)$ be a Coxeter element, and 
$\langle c\rangle$ the group generated by $c$. Consider the action of $\langle c\rangle$
on $R_f$. By Theorem 8.1 in \cite{Kostant1959}, each $\langle c \rangle$-orbit contains a positive root, say $\alpha$, which is sent to a negative root, i.e., $c(\alpha) \in -R_f^+$. Such a positive root can be expressed as follows. Suppose that $c=s_{\alpha_l}s_{\alpha_{l-1}}\cdots s_{\alpha_1}$. Then, there exists $1\leq j \leq l$ such that $\alpha=s_{\alpha_1}s_{\alpha_2} \cdots s_{\alpha_{j-1}}(\alpha_j)$
Moreover, by Corollay 8.2 \textit{ibid}, each $\langle c \rangle$-orbit contains exactly $\Cox{X_l}$ elements. 
Thus, we have
\begin{lemma} \label{lemma2}
$\vert (R_f)_s \vert=\Cox{X_l}\vert \Pi_{f,s} \vert$ and $ \vert (R_f)_l \vert=\Cox{X_l}\vert \Pi_{f,l} \vert$.
\end{lemma}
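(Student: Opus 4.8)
The plan is to exploit the fact that the Coxeter element $c=s_{\alpha_l}s_{\alpha_{l-1}}\cdots s_{\alpha_1}$ acts on $F_f$ as an isometry, so that each $\langle c\rangle$-orbit in $R_f$ is homogeneous with respect to root length. First I would observe that every $s_{\alpha_i}$ lies in $W(R_f)\subset \mathrm{O}(F_f,I_f)$, so $c$ preserves the form $I_f(\cdot,\cdot)$; consequently all roots in a single $\langle c\rangle$-orbit share the same value of $I_f(\alpha,\alpha)$ and are therefore either all short or all long. This is the one structural input beyond the two results of Kostant already quoted above the statement.

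Next I would index the orbits by the simple roots. By the discussion preceding the lemma, each $\langle c\rangle$-orbit contains exactly one positive root sent to a negative root by $c$, and such a root has the form $\alpha^{(j)}=s_{\alpha_1}s_{\alpha_2}\cdots s_{\alpha_{j-1}}(\alpha_j)$ for a unique $1\leq j\leq l$. As $j$ ranges over $\{1,\dots,l\}$ these are $l$ distinct positive roots, one in each orbit, so the $\langle c\rangle$-orbits in $R_f$ are in bijection with $\alpha_1,\dots,\alpha_l$. Since $\alpha^{(j)}$ is the image of $\alpha_j$ under the isometry $s_{\alpha_1}\cdots s_{\alpha_{j-1}}$, one has $I_f(\alpha^{(j)},\alpha^{(j)})=I_f(\alpha_j,\alpha_j)$, so the orbit indexed by $j$ consists of short roots precisely when $\alpha_j\in\Pi_{f,s}$ and of long roots precisely when $\alpha_j\in\Pi_{f,l}$.

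Finally I would assemble the count. By Corollary 8.2 of \cite{Kostant1959}, every orbit has exactly $\Cox{X_l}$ elements, and partitioning $(R_f)_s$ (resp.\ $(R_f)_l$) into the orbits indexed by short (resp.\ long) simple roots yields
\[ \vert(R_f)_s\vert=\Cox{X_l}\cdot\vert\Pi_{f,s}\vert,\qquad \vert(R_f)_l\vert=\Cox{X_l}\cdot\vert\Pi_{f,l}\vert, \]
as claimed. The only point requiring care is the length-homogeneity of the orbits together with the length-matching between $\alpha^{(j)}$ and $\alpha_j$; both reduce to the fact that elements of $W(R_f)$ act by isometries, so I do not expect a genuine obstacle once Kostant's two results are in hand.
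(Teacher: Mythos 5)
Your argument is correct and is essentially the proof the paper itself gives (in the paragraph immediately preceding the lemma): the $\langle c\rangle$-orbits are indexed by the simple roots via the representatives $s_{\alpha_1}\cdots s_{\alpha_{j-1}}(\alpha_j)$, each orbit has $\Cox{X_l}$ elements by Kostant's Corollary 8.2, and length-homogeneity of the orbits follows since $W(R_f)$ acts by isometries. You have merely made explicit the two length-preservation observations that the paper leaves implicit.
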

We now express $\dim \fg(Y_N)$ in terms of $\vert \Pi_{f,s} \vert$ and $\vert \Pi_{f,l} \vert$. 
Since $t=\vert \langle \tilde{\sigma}_f \rangle \vert=\big\vert H.\beta \big\vert$ if $\beta \in \widetilde{R}_f \setminus \widetilde{R}_f^H$, 
one has 
\begin{align*}
\vert \tilde{R}_f\vert =
&\big\vert (\widetilde{R}_f)^{\langle \tilde{\sigma}_f\rangle} \big\vert+
\big\vert \widetilde{R}_f \setminus \widetilde{R}_f^{\langle \tilde{\sigma}_f\rangle} \big\vert=\big\vert (R_f)_s \big\vert+t\big\vert (R_f)_l \big\vert, \\
N=
&\big\vert \widetilde{\Pi}_f\vert=\big\vert \widetilde{\Pi}_f^{\langle \tilde{\sigma}_f \rangle} \big\vert+
\big\vert \widetilde{\Pi}_f \setminus \widetilde{\Pi}_f^{\langle \tilde{\sigma}_f \rangle} \big\vert
=\big\vert \Pi_{f,s} \big\vert+t\big\vert \Pi_{f,l}\big\vert.
\end{align*}
Hence, Lemma \ref{lemma2} implies
\[ \dim \fg(Y_N)=N+\vert \tilde{R}_f \vert=(\Cox{X_l}+1)\big(\big\vert \Pi_{f,s} \big\vert+t\big\vert \Pi_{f,l}\big\vert). 
\]
Hence, the left hand side of \eqref{Macdonald-step1-II} becomes
\begin{align*}
&e^{-\frac{1}{24}(\Cox{X_l}+1)\big(\big\vert \Pi_{f,s} \big\vert+t\big\vert \Pi_{f,l}\big\vert)\delta}
\prod_{n=1}^\infty \Big( \big(1-e^{-n\delta}\big)^{(\Cox{X_l}+1)\big\vert \Pi_{f,s}\big\vert}\big(1-e^{-tn\delta}\big)^{(\Cox{X_l}+1)\big\vert \Pi_{f,l}\big\vert}\Big) \\
=
&
\Big(\eta\big(e^{-\delta}\big)^{\big\vert \Pi_{f,s}\big\vert}\eta\big(e^{-t\delta}\big)^{\big\vert \Pi_{f,l}\big\vert}\Big)^{\Cox{X_l}+1}.
\end{align*}
As for the right hand side of \eqref{Macdonald-step1-II},  \eqref{I-value-2} implies 
\[ 2\frac{\hat{I}(\rho,\delta)}{\hat{I}(\theta_s,\theta_s)}=\frac{2}{I_f(\theta_s,\theta_s)} \cdot \frac{I_f(\theta_s,\theta_s)}{2}\dCox{X_l^{(t)}}
=\dCox{X_l^{(t)}} \]
from which a similar argument to the last subsection implies that the right hand side of \eqref{Macdonald-step1-II} becomes
\[ \sum_{\gamma \in \dCox{X_l^{(t)}} Q(R_f)} d_{X_l}(\gamma)
e^{-\frac{I_f(\rho_f+\gamma,\rho_f+\gamma)}{2\dCox{X_l^{(t)}}}\cdot \frac{2}{I_f(\theta_s,\theta_s)}\delta}.
\]
Thus, we obtain the so-called Macdonald identity:
\begin{thm}[cf. Theorem 8.11 in \cite{Macdonald1972}]
Let $R_{af}$ be a twisted affine root system of type $X_l^{(t)}$ with $t\neq 1$ and $R_f$ be its finite quotient system. Assume that $R_{af}$ is not of type $BC_l^{(2)}$. 
\begin{equation}\label{Macdonald-id-2}
\Big(\eta\big(e^{-\delta}\big)^{\big\vert \Pi_{f,s}\big\vert}\eta\big(e^{-t\delta}\big)^{\big\vert \Pi_{f,l}\big\vert}\Big)^{\Cox{X_l}+1}=
\sum_{ \gamma \in \dCox{X_l^{(t)}} Q(R_f)}d_{X_l}(\gamma)e^{-\frac{I_f(\rho_f+\gamma,\rho_f+\gamma)}{2\dCox{X_l^{(t)}}}\cdot \frac{2}{I_f(\theta_s,\theta_s)}\delta}.
\end{equation}
\end{thm}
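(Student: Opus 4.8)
The plan is to follow, \emph{mutatis mutandis}, the two-step argument that produced the untwisted identity \eqref{Macdonald-id-1}, now feeding in the twisted denominator identity \eqref{eq:denominator-id-affine-II}. First I would divide both sides of \eqref{eq:denominator-id-affine-II} by $e^{\rho}\prod_{\alpha\in R_f^+}(1-e^{-\alpha})$. On the right, the decomposition \eqref{Weyl-twisted}, $\widehat{W}(R_{af})\cong W(R_f)\ltimes\frac{2}{\hat{I}(\theta_s,\theta_s)}Q(R_f)$, reorganizes the alternating sum into a sum over translations $\gamma$ in the lattice $\frac{2}{\hat{I}(\theta_s,\theta_s)}Q(R_f)$, each summand being a finite-type denominator quotient times an exponential in $\delta$; this is precisely \eqref{Macdonald-step1-II}. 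On the left one is left with the imaginary-root product together with the $\delta$-shift controlled by $\hat{I}(\rho,\rho)/(2\hat{I}(\rho,\delta))$, so the whole task reduces to pinning down this shift and the exponents of the product, and then specializing.

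The crux is the evaluation of the shift. From \eqref{I-value-2} one reads off $\hat{I}(\rho,\delta)=\frac{1}{2}\hat{I}(\theta_s,\theta_s)\dCox{X_l^{(t)}}$, so it remains to compute $I_f(\rho_f,\rho_f)/(2\dCox{X_l^{(t)}})$. Since the Freudenthal--de Vries strange formula is cleanest in the self-dual simply-laced setting, I would realize $R_f$ and $R_{af}$ as foldings $R_f=\Tr^{H}(\widetilde{R}_f)$, $R_{af}=\Tr^{H}(\widetilde{R}_{af})$ of a simply-laced $\widetilde{R}_f$ of type $Y_N$ (with $\widetilde{R}_{af}$ of type $Y_N^{(1)}$), using the construction of \S\ref{sect:folding}; the key facts are $\tilde{\rho}_f=\rho_f$ and $\tilde{\theta}=\theta_s$. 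The step I expect to be the main obstacle is the identification $\dCox{X_l^{(t)}}=\dCox{Y_N^{(1)}}$: one must track how the colabels of $R_f$ pick up the tier factor $t$ on the long simple roots --- so that $a_i^\vee=a_i$ for $\alpha_i\in\Pi_{f,s}$ and $a_i^\vee=t a_i$ for $\alpha_i\in\Pi_{f,l}$ --- and then match the resulting sum against the labels of the simply-laced system upstairs. Granting this, the strange formula for $\widetilde{R}_f$ gives $\hat{I}(\rho,\rho)/(2\hat{I}(\rho,\delta))=\dim\fg(Y_N)/24$.

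With the shift in hand I would compute the exponents of the left-hand product. Counting short and long roots by Kostant's theory of Coxeter elements --- each $\langle c\rangle$-orbit in $R_f$ carries exactly $\Cox{X_l}$ roots --- gives $\vert(R_f)_s\vert=\Cox{X_l}\vert\Pi_{f,s}\vert$ and $\vert(R_f)_l\vert=\Cox{X_l}\vert\Pi_{f,l}\vert$, which is Lemma \ref{lemma2}. Combining this with the folding count $\dim\fg(Y_N)=N+\vert\widetilde{R}_f\vert=(\Cox{X_l}+1)(\vert\Pi_{f,s}\vert+t\vert\Pi_{f,l}\vert)$ lets me regroup the left-hand side into the product of two Dedekind eta factors, $\eta(e^{-\delta})^{\vert\Pi_{f,s}\vert}$ and $\eta(e^{-t\delta})^{\vert\Pi_{f,l}\vert}$, each raised to the power $\Cox{X_l}+1$.

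Finally, for the right-hand side I would specialize $e^{\alpha_i}\mapsto 1$ ($1\leq i\leq l$) exactly as in \S\ref{sect:Macdonald1}: evaluating each finite-type quotient at $t\rho_f$ and letting $t\to 0$ produces $d_{X_l}(\gamma)$, while the identity $2\hat{I}(\rho,\delta)/\hat{I}(\theta_s,\theta_s)=\dCox{X_l^{(t)}}$ rescales the translation lattice to $\dCox{X_l^{(t)}}Q(R_f)$. Assembling the two specialized sides then yields \eqref{Macdonald-id-2}.
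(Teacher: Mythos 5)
Your proposal is correct and follows essentially the same route as the paper: dividing the twisted denominator identity by the finite denominator, using \eqref{Weyl-twisted} to reach \eqref{Macdonald-step1-II}, evaluating the $\delta$-shift via the folding realization $R_f=\Tr^{H}(\widetilde{R}_f)$ together with the identity $\dCox{X_l^{(t)}}=\dCox{Y_N^{(1)}}$ and the strange formula, counting roots by Kostant's Coxeter-orbit argument, and specializing $e^{\alpha_i}\mapsto 1$ to produce $d_{X_l}(\gamma)$. The only cosmetic caveat is that in the final specialization step the letter $t$ is being used both for the tier number and for the evaluation parameter $t\rho_f$, $t\to 0$; choose a different symbol there to avoid a clash.
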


\subsection{Type $BC_l^{(2)}$}
Here we treat the case when $R_{af}$ is of type $BC_l^{(2)}$ rapidly.  As in the last subsections, we assume that $R_{af}$ belongs to $(F,I)\subset (\widehat{F},\hat{I})$. \\

Let $(R_{af})_s, (R_{af})_m$ and $(R_{af})_l$ be the subset of short, middle length and long roots of $R_{af}$, respectively. Set $(R_f')_\ast=R_f' \cap (R_{af})_\ast$ ($\ast \in \{m,l\}$). By definition, 
\begin{align*}
(R_{af})_s=
&\Big\{\,  \frac{1}{2}\big( \alpha+(2n-1)\delta \big)\, \Big\vert \, \alpha \in (R_f')_l, \; n \in \Z\, \Big\}, \\
(R_{af})_m=
&(R_f')_m+\Z \delta, \qquad (R_{af})_l=(R_f')_l+2\Z \delta.
\end{align*}
The denominator identity of type $BC_l^{(2)}$ gives
\begin{align*}
&e^{\rho-\frac{\hat{I}(\rho,\rho)}{2\hat{I}(\rho,\delta)}\delta}
\prod_{\alpha \in (R_f')^+}(1-e^{-\alpha}) 
\prod_{n=1}^\infty \Big( \big(1-e^{-n\delta}\big)^l \\
&\phantom{e^{\rho-\frac{\hat{I}(\rho,\rho)}{2\hat{I}(\rho,\delta)}}\prod_{\alpha \in (R_f')^+}(1-e^{-\alpha})}\times 
\prod_{\alpha \in (R_f')_m}\big( 1-e^{\alpha-n\delta}\big) 
\prod_{\alpha \in (R_f')_l}\big( 1-e^{\alpha-2n\delta}\big)\big(1-e^{\frac{1}{2}(\alpha-(2n-1)\delta)}\big)\Big) 
\\
=
&\sum_{w \in \widehat{W}(R_{af})} \det{}_{\widehat{F}}(w)e^{w(\rho)-\frac{\hat{I}(\rho,\rho)}{2\hat{I}(\rho,\delta)}\delta}.
\end{align*}
Dividing the both sides by $e^{\rho}\prod_{\alpha \in (R_f')^+}(1-e^{-\alpha})$ and specializing $e^{-\alpha_i}$ $(0\leq i\leq l-1)$ to $1$, 
we obtain the so-called the Macdonald identity:
\begin{thm}[cf. Type $BC_l$ (6)(b) of Appendix 1 in \cite{Macdonald1972}]
Let $R_{af}$ be an affine root system of type $BC_l^{(2)}$ and $R_f'$ be a finite root subsystem of type $C_l$. 
\begin{equation}\label{Macdonald-identity-3}
\bigg(  \eta\big(e^{-\frac{1}{2}\delta}\big)^{2}\eta\big(e^{-\delta}\big)^{2l-3}\eta\big(e^{-2\delta}\big)^{2}\bigg)^{l}
=\sum_{\gamma \in \hat{I}(\rho,\delta)Q((R_f')^\vee)}d_{C_l}(\gamma)e^{-\frac{1}{2\hat{I}(\rho,\delta)}\hat{I}(\rho+\gamma,\rho+\gamma)\delta}. 
\end{equation}
\end{thm}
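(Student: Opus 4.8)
The plan is to mimic, step by step, the two-stage reduction (denominator identity $\to$ specialization $\to$ $\eta$-products) already executed in \S\ref{sect:Macdonald1} and \S\ref{sect:Macdonald2}, now applied to the $BC_l^{(2)}$ denominator identity displayed just above the statement. The only genuinely new inputs are: (i) the explicit decomposition of $(R_{af})_s, (R_{af})_m, (R_{af})_l$ recorded above, which dictates the exponents of the three distinct $\eta$-factors $\eta(e^{-\frac{1}{2}\delta})$, $\eta(e^{-\delta})$, $\eta(e^{-2\delta})$; and (ii) the constant $\hat{I}(\rho,\rho)/2\hat{I}(\rho,\delta)$, which must be shown to equal the total $\frac{1}{24}\cdot(\text{sum of exponents})$ so that the half-integer powers of $e^{-\delta}$ collected from the left side assemble correctly into $\eta$-functions.

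First I would divide the displayed $BC_l^{(2)}$ denominator identity by $e^{\rho}\prod_{\alpha\in (R_f')^+}(1-e^{-\alpha})$ and specialize $e^{-\alpha_i}\mapsto 1$ for $0\le i\le l-1$, exactly as in the previous two subsections. On the left, after specialization each of the three products over $(R_f')_m$ and $(R_f')_l$ contributes a constant exponent times a pure power of the relevant $(1-e^{-m\delta})$. The key counting step is to verify, using $R_f'\cong R(C_l)$, that $\vert(R_f')_m\vert$ and $\vert(R_f')_l\vert$ produce exponents $2l-3$ on the $\eta(e^{-\delta})$-factor and $2$ on each of $\eta(e^{-\frac{1}{2}\delta})$ and $\eta(e^{-2\delta})$ — here I would invoke Kostant's orbit count (Theorem 8.1 and Corollary 8.2 of \cite{Kostant1959}), as in Lemma \ref{lemma2}, to turn root-set cardinalities into $\Cox{C_l}\cdot\vert\Pi'\vert$ expressions, and then check that the overall exponent factors as the stated $l$-th power $(\cdots)^l$. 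The half-integer shift $\tfrac12(\alpha-(2n-1)\delta)$ from the short roots is what forces the $\eta(e^{-\frac12\delta})$ factor; one must be careful to track the fractional $\delta$-powers so that the prefactor $e^{-\frac{\hat I(\rho,\rho)}{2\hat I(\rho,\delta)}\delta}$ precisely supplies the needed $q^{1/24}$-type corrections.

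For the right-hand side I would use the isomorphism \eqref{Weyl-BC}, $\widehat{W}(R_{af})\cong W(R_f')\ltimes Q((R_f')^\vee)$, which is structurally identical to the untwisted case \eqref{Weyl-untwisted}. Writing each $w=ut_\gamma$ with $\gamma\in Q((R_f')^\vee)$ and applying the same computation of $ut_\gamma(\rho)-\rho-\frac{\hat I(\rho,\rho)}{2\hat I(\rho,\delta)}\delta$ as in \S\ref{sect:Macdonald1}, the sum factors as a $\gamma$-indexed series whose coefficient is the specialized Weyl-numerator-over-denominator ratio. Evaluating that ratio at $t\rho_f'$ and letting $t\to 0$ gives the dimension-polynomial $d_{C_l}(\gamma)$ exactly as before, while the surviving $\delta$-exponent is $-\frac{1}{2\hat I(\rho,\delta)}\hat I(\rho+\gamma,\rho+\gamma)$, matching the right side of \eqref{Macdonald-identity-3}. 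Since $\alpha_0$ is no longer a special index here, I would take care to use the $C_l$-splitting $F=F_f'\oplus\rad(I)$ and the Weyl vector $\rho_f'$ throughout, rather than the $B_l$ data of $R_f$.

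The main obstacle I expect is the bookkeeping on the left-hand side: correctly matching the half-integer powers of $\delta$ coming from the short-root factor $1-e^{\frac12(\alpha-(2n-1)\delta)}$ against the normalizing prefactor, and confirming that the three $\eta$-exponents collapse to the clean common $l$-th power. This is where the special structure of $BC_l^{(2)}$ (the coexistence of $\Z\delta$, $2\Z\delta$, and half-integer $\delta$-shifts) makes the accounting more delicate than in the untwisted or the non-$BC$ twisted cases; everything else transports verbatim from the preceding arguments. As the authors themselves note, once this one intermediate division-and-specialization step is carried out, the reader who has followed \S\ref{sect:Macdonald1}--\S\ref{sect:Macdonald2} should be able to fill in the remaining routine computations without difficulty.
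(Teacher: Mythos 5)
Your proposal follows exactly the route the paper takes (and deliberately leaves terse): divide the $BC_l^{(2)}$ denominator identity by $e^{\rho}\prod_{\alpha\in (R_f')^+}(1-e^{-\alpha})$, specialize $e^{-\alpha_i}\mapsto 1$, use the splitting $F=F_f'\oplus\rad(I)$ with $\rho_f'$ and the decomposition \eqref{Weyl-BC} to produce the $d_{C_l}(\gamma)$-sum, and sort the three families of real roots into the factors $\eta(e^{-\frac12\delta})$, $\eta(e^{-\delta})$, $\eta(e^{-2\delta})$. The bookkeeping you flag does close up correctly (with $\vert(R_f')_m\vert=2l(l-1)$ and $\vert(R_f')_l\vert=2l$ one gets exponents $2l$, $l(2l-3)$, $2l$ after the $\eta(e^{-\delta})^{-2l}$ correction from the half-integer shifts), so this is essentially the paper's own argument, carried out in the detail the authors leave to the reader.
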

Details are left to the reader.
\subsection{Postscript}
The article \cite{Macdonald1972} contains quite intriguing computations. 

For example, I. Macdonald has classified affine root systems even of non-reduced type and he obtained $4$ types:
$R(BCC_l)$, $R(C^\vee BC_l)$, $R(BB_l^\vee)$ and $R(C^\vee C_l)$, but its denominator identities were not given. 
In 1978, V. Kac \cite{Kac1978} considered a certain class infinite dimensional Lie superalgebras which contains $4$ types of affine Lie superalgebras: $B^{(1)}(0,l)$, $A^{(4)}(0,2l)$, $A^{(2)}(0,2l-1)$ and $C^{(2)}(l+1)$.  Indeed, up to $\Z/2\Z$-structure, we have the next correspondence:
\begin{center}
\begin{tabular}{|c||c|c|c|c|} \hline 
Non-reduced type & $BCC_l$ & $C^\vee BC_l$ & $BB_l^\vee$ & $C^\vee C_l$  \\ \hline \hline
Affine super          & $B^{(1)}(0,l)$ & $A^{(4)}(0,2l)$ & $A^{(2)}(0,2l-1)$ & $C^{(2)}(l+1)$ \\ \hline
\end{tabular}
\end{center}
where those in the same column are the same root system ! Moreover, their Macdonald identities coincides with those for types $BC_l^{(2)}$, $B_l^{(2)}$, $B_l^{(1)}$ and $B_l^{(2)}$, respectively. Notice that these $4$ affine root systems of reduced type are root subsystems of the corresponding non-reduced type consisting of the indivisible roots (cf. \cite{Macdonald1972}). There still seems to have 
many hidden structures to be explored in \cite{Macdonald1972}.

\bibliographystyle{plain}
\bibliography{Elliptic}
\phantom{ABC}
\vskip 0.5in

{\small
\begin{tabular}{@{}l@{}}%
\textsc{K. Iohara, }\\
\text{Univ Lyon, Universit\'{e} Claude Bernard Lyon 1,} \\ 
\text{CNRS, Institut Camille Jordan UMR 5208} \\
\text{F-69622 Villeurbanne, France} \\ 
\textit{E-mail address}: \texttt{iohara@math.univ-lyon1.fr}
\end{tabular}

\vskip 0.2in
\qquad \qquad

\begin{tabular}{@{}l@{}}%
\textsc{Y. Saito, }\\
\text{Department of Mathematics,} \\ 
\text{Rikkyo University,} \\
\text{Toshima-ku, Tokyo 171-8501, Japan} \\ 
\textit{E-mail address}: \texttt{yoshihisa@rikkyo.ac.jp}
\end{tabular}

}

\end{document}